\newtheorem{theorem}{Theorem}
\newtheorem{corollary}{Corollary}
\newtheorem{proposition}{Proposition}
\newenvironment{definition}
{\smallskip\noindent{\bf Definition\/}:}{\smallskip\par}
\newenvironment{remark}
{\smallskip\noindent{\bf Remark\/}.}{\smallskip\par}
\newenvironment{remarks}
{\smallskip\noindent{\bf Remarks\/}.}{\smallskip\par}
\newenvironment{proof}{\begin{ProofwCaption}{Proof}}{\end{ProofwCaption}}
\newenvironment{proof*}[1]{\begin{ProofwCaption}{{#1}}}{\end{ProofwCaption}}
\newenvironment{ProofwCaption}[1]%
  {\addvspace\theorempreskipamount \noindent{\it #1.}\rm}%
  {\qed \par \addvspace\theorempostskipamount}
\newcommand{\qedsymbol}{\mbox{$\Box$}}
\newcommand{\qed}{\hfill\qedsymbol}
\newcommand{\CC}{{\mathbb C}}
\newcommand{\ZZ}{{\mathbb Z}}
\newcommand{\Kring}[1]{K_0(\mbox{f.}{#1}\,\mbox{\rm -sets})}
\title{Equivariant Poincar\'e series and monodromy zeta functions of quasihomogeneous polynomials}
\author{Wolfgang Ebeling and Sabir M.~Gusein-Zade
\thanks{Partially supported by the DFG Mercator program (INST 187/490-1), the Russian government grant 11.G34.31.0005, RFBR--10-01-00678,
NSh--8462.2010.1.
Keywords: group actions, Burnside rings, zeta functions, Poincar\'e series.
AMS 2010 Math. Subject Classification: 32S40, 13D40, 19A22.
}
}
\date{}
\begin{document}
\selectlanguage{english}

\maketitle

\begin{abstract}
In earlier work, the authors described a relation between the Poincar\'e series and the classical monodromy zeta function corresponding to a quasihomogeneous polynomial. 
Here we formulate an equivariant version of this relation in terms of the Burnside rings of finite abelian groups and their analogues.
\end{abstract}

%% \section*{Introduction} 

%%%%%%%%%%%%%%%%
%% \section{}\label{}
%%%%%%%%%%%%%%%%
Let $f(x_1, \ldots, x_n)$ be a quasihomogeneous polynomial.
In \cite{E}, \cite{MRL}, there was described a relation between the Poincar\'e series $P_X(t)$ of the coordinate ring of a hypersurface singularity $X=\{ f=0 \}$
and the classical monodromy zeta function $\zeta_f(t)$  of the polynomial $f$. The relation involved 
the so called Saito duality: \cite{Saito1}, \cite{Saito2}. 
Namely, 
in \cite{MRL}, it was shown that
\begin{equation}\label{ini}
P_X(t)\cdot \mbox{Or}_X(t) = \widetilde{\zeta}_f^*(t)\,.
\end{equation}
Here $\mbox{Or}_X(t) $ is a rational function determined by the orbit types of the natural $\CC^\ast$-action on $X$ (see, e.g., \cite{Trieste}), $\widetilde{\zeta}_f=\zeta_f(t)/(1-t)$ is the reduced monodromy zeta function of $f$, and 
$\widetilde{\zeta}_f^*(t)$ is the Saito dual of $\widetilde{\zeta}_f(t)$
with respect to the quasidegree of the polynomial $f$.

This relation had no intrinsic explanation. It was obtained by computation of both sides and 
comparison of the results. In particular, the role of the Saito duality remained unclear. In \cite{G-Saito}, an equivariant version of the Saito duality
for a finite abelian group was formulated as a transformation between the Burnside rings of the group $G$ and of the group $G^*$ of its characters.
Here we use the Burnside rings and their analogues to define equivariant versions of the
ingredients of the relation (\ref{ini}) and to give an equivariant
analogue of it. This generalization can help to understand the role of the ingredients of the relation and, in particular, of the Saito duality in it.

Let $f$ be a quasihomogeneous polynomial in $n$ variables $x_1$, \dots, $x_n$ of degree $d$ with respect to the weights $q_1$, \dots, $q_n$
($q_i$ are positive integers, $\gcd(q_1, \ldots, q_n)=1$), i.e.\  
$$
f(\lambda^{q_1}x_1, \ldots, \lambda^{q_n}x_n)=\lambda^{d}f(x_1, \ldots, x_n)\mbox{\ \ for\ \ }\lambda\in\CC\,.
$$
From now on we assume that the polynomial $f$ determines the system of weights $(q_1,\ldots, q_n; d)$
in a unique way. This means that, in the lattice $\ZZ^n$ of monomials in 
$x_1$, \dots, $x_n$ (a point $(k_1, \ldots, k_n)\in\ZZ^n$ corresponds to the monomial $x_1^{k_1}\cdots x_n^{k_n}$), the monomials participating in $f$ with non-zero coefficients
generate an affine hyperplane (the hyperplane $\{\sum_i q_ik_i=d\}$). The system of weights $(q_1,\ldots, q_n; d)$ defines
a $\CC^*$-action on the space $\CC^n$:
\begin{equation}\label{action}
\lambda*(x_1, \ldots, x_n)=(\lambda^{q_1}x_1, \ldots, \lambda^{q_n}x_n)\,.
\end{equation}

Let
$$
G_f=\{\underline{\lambda}=(\lambda_1, \ldots, \lambda_n)\in(\CC)^n:f(\lambda_1 x_1, \ldots, \lambda_n x_n)=f(x_1, \ldots, x_n)\}
$$
be {\em the} (abelian) {\em symmetry group} of $f$, i.e.\  the group of diagonal linear transformations of $\CC^n$ preserving $f$. Let 
$$
\overline{G}_f=\{\underline{\lambda}\in(\CC)^n:f(\lambda_1 x_1, \ldots, \lambda_n x_n)=\alpha(\underline{\lambda})f(x_1, \ldots, x_n)\}
$$
be {\em the extended symmetry group} of $f$, i.e.\  the group of diagonal transformations of $\CC^n$ preserving $f$ up to a constant factor ($\alpha:\overline{G}_f\to\CC^*$
is a one-dimensional representation of the group $\overline{G}_f$). In other words, this is
the group of transformations preserving the hypersurface $X=\{f=0\}\subset\CC^n$.
The group $\overline{G}_f$ contains both the symmetry group $G_f$ and the group $\CC^*$ corresponding to the action~(\ref{action}) and is generated by these two subgroups.
The intersection of $G_f$ and $\CC^*$ is the cyclic subgroup of order $d$ in $\CC^*$. (It is generated by the monodromy transformation of $f$: see below.)

For a group $G$, let $R(G)$ be the ring of complex representations of $G$. As an abelian group,
$R(G)$ is freely generated by the isomorphism classes of 
irreducible representations of the group $G$. For an abelian group $G$ (say, for a subgroup of $G_f$
or of $\overline{G}_f$) all irreducible representations
are one-dimensional, i.e.\  are elements of the group of characters $G^*=\mbox{Hom}(G,\CC^*)$.

Let $\overline{G}$ be a subgroup of the extended symmetry group $\overline{G}_f$ of the polynomial $f$ containing the subgroup $\CC^*$, let $G=\overline{G}\cap G_f$.
We shall call an irreducible (one-dimensional) representation $\alpha$ of the group $\overline{G}$ non-positive ($\alpha\le 0$) or negative ($\alpha< 0$) 
if, for $a\in \CC^*\subset \overline{G}$, one has $\alpha(a)=a^k$ with $k$ non-positive or negative respectively.
Let $R_{-}(\overline{G})$ be the subring of the ring $R(\overline{G})$ of representations of the group $\overline{G}$ generated by non-positive representations.
The ring $R_{-}(\overline{G})$ contains the ideal $I$ generated by negative representations of $\overline{G}$.
Let $\widehat{R}_{-}(\overline{G})$ be the completion of the ring $R_{-}(\overline{G})$ with respect to the ideal $I$. Elements of the ring $\widehat{R}_{-}(\overline{G})$
are (formal) sums of the form $\sum\limits_{\alpha\in \overline{G}^*\!\!,\,\alpha\le 0} s_{\alpha}[\alpha]$ with integer coefficients $s_{\alpha}$. Let $\widehat{I}\subset \widehat{R}_{-}(\overline{G})$
be the corresponding completion of the ideal $I$.

One has the following homomorphism (an isomorphism) $\mbox{Exp}$ from $\widehat{I}$ regarded as
a group with respect to addition to $1 + \widehat{I}$ regarded 
as a group with respect to multiplication:
$$
\mbox{Exp}(\sum\limits_{\alpha\in \overline{G}^*\!\!,\,\alpha< 0} s_{\alpha}[\alpha])= \prod\limits_{\alpha\in \overline{G}^*\!\!,\,\alpha< 0}(1-[\alpha])^{-s_{\alpha}}.
$$ 
The inverse is the homomorphism $\mbox{Log}:1 + \widehat{I}\to \widehat{I}$ (cf. \cite{power}).

Let $A_X=\CC[x]/(f)$ be the coordinate ring of the zero level set $X=f^{-1}(0)$ of $f$.
The group $\overline{G}$ acts both on the ring 
$\CC[x]$ and on the ring $A_X$ by $a*g(x)=g(a^{-1}*x)$ for $a\in \overline{G}$.
Its representation on any $\overline{G}$-invariant one-dimensional subspace of $\CC[x]$ or of $A_X$ (say, on the subspace generated by a monomial) is non-positive and on any of them
except the one consisting of the constant functions is negative. Let $A$ be either $A_{\CC^n}=\CC[x]$ or $A_X$.
For a (one-dimensional) irreducible representation $\alpha$ of the group $\overline{G}$, let $A^{\alpha}$ be the corresponding
subspace of $A$: $A^{\alpha}=\{g\in A: a*g=\alpha(a)g \mbox{\ \ for\ \ } a\in \overline{G}\}$. For each $\alpha$, the subspace $A^{\alpha}$ is finite dimensional and is generated by monomials.

\begin{definition}
The $\overline{G}$-equivariant Poincar\'e series $P^{\overline{G}}$ of the ring $A$ is the element of the completion $\widehat{R}_{-}(\overline{G})$
of the representation ring of the group $\overline{G}$ defined by
\begin{equation}\label{Poincare}
P^{\overline{G}}=\sum\limits_{\alpha\in \overline{G}^*\!\!,\,\alpha\le 0} \dim A^{\alpha}\cdot [\alpha]\,.
\end{equation}
\end{definition}

\begin{remark}
For $\overline{G}=\CC^*$, an irreducible $\CC^*$-representation is a power of the tautological representation. If one denotes the inverse of the tautological
representation by $t$, one gets the Poincar\'e series (\ref{Poincare}) as a power series in $t$. In this case it coincides with the usual Poincar\'e series
of the ring $A$ corresponding to the (quasihomogeneous) grading defined by the weights. 
\end{remark}

For $A=A_X$ and for $A=A_{\CC^n}$ we shall denote the $\overline{G}$-equivariant Poincar\'e series $P^{\overline{G}}$ by $P_X^{\overline{G}}$ and $P_{\CC^n}^{\overline{G}}$ respectively.
Let $\alpha_{x_i}$, $i=1, \ldots, n$, and $\alpha_f$ be the representations of the group $\overline{G}$ on the one-dimensional subspaces in $\CC[x]$ generated by the functions $x_i$ and $f$ respectively.

\begin{proposition}
$$
P_X^{\overline{G}}=\frac{1-[\alpha_f]}{\prod_{i=1}^n (1-[\alpha_{x_i}])}\,.
$$
\end{proposition}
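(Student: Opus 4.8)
The plan is to compute the two equivariant Poincar\'e series $P_{\CC^n}^{\overline{G}}$ and $P_X^{\overline{G}}$ and to link them through the short exact sequence of $\overline{G}$-modules
\[
0\longrightarrow\CC[x]\stackrel{\cdot f}{\longrightarrow}\CC[x]\longrightarrow A_X\longrightarrow 0 .
\]
All manipulations take place in the completed ring $\widehat{R}_{-}(\overline{G})$. Since $x_i$ and $f$ span $\overline{G}$-invariant one-dimensional subspaces of $\CC[x]$ consisting of non-constant functions, the representations $\alpha_{x_i}$ and $\alpha_f$ are negative, so $[\alpha_{x_i}],[\alpha_f]\in\widehat{I}$; hence $1-[\alpha_f]$ and each $1-[\alpha_{x_i}]$ lie in $1+\widehat{I}$, and the right-hand side is the well-defined element $(1-[\alpha_f])\cdot\prod_{i}(1-[\alpha_{x_i}])^{-1}$ of $\widehat{R}_{-}(\overline{G})$, where $(1-[\alpha_{x_i}])^{-1}=\mbox{Exp}([\alpha_{x_i}])=\sum_{k\ge 0}[\alpha_{x_i}]^k$.

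First I would treat $A_{\CC^n}=\CC[x]$. Each monomial $x_1^{k_1}\cdots x_n^{k_n}$ spans a one-dimensional $\overline{G}$-invariant subspace affording the representation $[\alpha_{x_1}]^{k_1}\cdots[\alpha_{x_n}]^{k_n}$ (for one-dimensional representations the product in $R(\overline{G})$ is the tensor product, i.e.\ the product of characters), and $\CC[x]$ is the direct sum of these lines. Thus $\dim\CC[x]^{\alpha}$ equals the (finite) number of exponent vectors $(k_1,\ldots,k_n)\in\ZZ_{\ge0}^n$ with $\prod_i\alpha_{x_i}^{k_i}=\alpha$; summing over $\alpha$ and regrouping terms gives
\[
P_{\CC^n}^{\overline{G}}=\sum_{(k_1,\ldots,k_n)\in\ZZ_{\ge0}^n}\prod_{i=1}^n[\alpha_{x_i}]^{k_i}=\prod_{i=1}^n\sum_{k\ge0}[\alpha_{x_i}]^k=\prod_{i=1}^n\frac{1}{1-[\alpha_{x_i}]}\,.
\]

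Next I would pass to $A_X=\CC[x]/(f)$. Multiplication by $f$ is injective (the polynomial ring is a domain), $\overline{G}$-equivariant, and carries $\CC[x]^{\alpha}$ isomorphically onto $f\cdot\CC[x]^{\alpha}\subseteq\CC[x]^{\alpha\cdot\alpha_f}$; since the ideal $(f)=f\,\CC[x]$ is the direct sum of the subspaces $f\cdot\CC[x]^{\alpha}$ and $\alpha\mapsto\alpha\cdot\alpha_f$ is injective on characters, the $\beta$-eigenspace of $(f)$ is exactly $f\cdot\CC[x]^{\beta\alpha_f^{-1}}$, of dimension $\dim\CC[x]^{\beta\alpha_f^{-1}}$. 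As $A_X^{\beta}=\CC[x]^{\beta}/(f)^{\beta}$, we get for every character $\beta$
\[
\dim A_X^{\beta}=\dim\CC[x]^{\beta}-\dim\CC[x]^{\beta\alpha_f^{-1}}\,.
\]
Multiplying by $[\beta]$ and summing over all non-positive $\beta$, while reindexing the second sum by $\gamma=\beta\alpha_f^{-1}$ (note $\gamma\le0$ forces $\beta=\gamma\cdot\alpha_f\le0$ because $\alpha_f<0$, so this is a bijection onto the non-positive characters), gives
\[
P_X^{\overline{G}}=P_{\CC^n}^{\overline{G}}-[\alpha_f]\cdot P_{\CC^n}^{\overline{G}}=(1-[\alpha_f])\,P_{\CC^n}^{\overline{G}}\,,
\]
and combining this with the product formula for $P_{\CC^n}^{\overline{G}}$ yields the assertion. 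The only delicate point is the bookkeeping in the completed ring: one must check that the rearrangements of these formal sums of representations, and the passage of the factor $1-[\alpha_f]$ through the infinite sum, are legitimate; but this is guaranteed by convergence in the $\widehat{I}$-adic topology, since $[\alpha_{x_i}]^k\to0$ and multiplication by an element of $1+\widehat{I}$ is continuous. So I expect no genuine obstacle, only the need to keep the grading conventions straight, in particular the direction of the shift by $\alpha_f$.
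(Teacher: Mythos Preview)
Your proof is correct and follows essentially the same route as the paper's: compute $P_{\CC^n}^{\overline{G}}=\prod_i(1-[\alpha_{x_i}])^{-1}$ from the monomial decomposition, then use the exact sequence $0\to\CC[x]\stackrel{\cdot f}{\to}\CC[x]\to A_X\to 0$ and the shift $\CC[x]^{\alpha}\mapsto\CC[x]^{\alpha\cdot\alpha_f}$ under multiplication by $f$ to obtain $\dim A_X^{\beta}=\dim\CC[x]^{\beta}-\dim\CC[x]^{\beta\alpha_f^{-1}}$. Your additional remarks on well-definedness in $\widehat{R}_{-}(\overline{G})$ and on the reindexing are just explicit justifications of steps the paper leaves implicit; the one loose phrase is that $\gamma\mapsto\gamma\alpha_f$ is not a bijection of $\{\gamma\le0\}$ \emph{onto} $\{\beta\le0\}$, but this is harmless since the missing $\beta$'s contribute $\dim\CC[x]^{\beta\alpha_f^{-1}}=0$.
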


\begin{proof}
The proof is essentially the same as in the non-equivariant case. 
One has
$$
P_{\CC^n}^{\overline{G}}=
\sum_{(k_1,\ldots, k_n)\in\ZZ_{\ge 0}^n}[\alpha_{x_1}^{k_1}\cdots\alpha_{x_n}^{k^n}]=
\frac{1}{\prod_{i=1}^n (1-[\alpha_{x_i}])}\,.
$$
One considers the exact sequence
$$
0\longrightarrow A_{\CC^n} \stackrel{\cdot f}{\longrightarrow} A_{\CC^n} \stackrel{\pi}{\longrightarrow} A_X \longrightarrow 0\,.
$$
The homomorphism $\pi$ maps $A_{\CC^n}^{\alpha}$ to $A_X^{\alpha}$. If $g\in A_{\CC^n}^{\alpha}$, i.e.\  $a*g=\alpha(a)g$ for $a\in \overline{G}$,
then $a*(fg)=\alpha(a)\alpha_f(a)\cdot fg$, i.e.\   $fg\in A_{\CC^n}^{\alpha\cdot\alpha_f}$.
Therefore $\dim A_X^{\alpha}=\dim A_{\CC^n}^{\alpha}-\dim A_{\CC^n}^{\alpha/\alpha_f}$.
This yields the statement.
\end{proof}

\begin{corollary}
$$
\mbox{\rm Log\,}P_X^{\overline{G}}=\sum_{i=1}^n [\alpha_{x_i}]-[\alpha_f]\,.
$$
\end{corollary}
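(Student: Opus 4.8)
The plan is to apply the $\mathrm{Log}$ homomorphism directly to the formula for $P_X^{\overline{G}}$ established in the preceding Proposition, using that $\mathrm{Log}$ is the inverse of the group isomorphism $\mathrm{Exp}\colon \widehat{I}\to 1+\widehat{I}$ and hence converts products into sums. Concretely, rewrite
$$
P_X^{\overline{G}}=\frac{1-[\alpha_f]}{\prod_{i=1}^n (1-[\alpha_{x_i}])}=(1-[\alpha_f])\cdot\prod_{i=1}^n(1-[\alpha_{x_i}])^{-1}=\mathrm{Exp}\Bigl(-[\alpha_f]+\sum_{i=1}^n[\alpha_{x_i}]\Bigr),
$$
where in the last step one reads off from the defining formula for $\mathrm{Exp}$ that the exponent carries coefficient $s_{\alpha_f}=-1$ and $s_{\alpha_{x_i}}=+1$. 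Applying $\mathrm{Log}$ to both ends then yields the claimed identity.

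First I would check that all the ingredients actually live in the right places so that $\mathrm{Exp}/\mathrm{Log}$ apply. The representations $\alpha_{x_i}$ are the representations on the one-dimensional subspaces generated by the $x_i$, hence negative (as noted in the excerpt, every $\overline{G}$-invariant one-dimensional subspace of $\CC[x]$ other than the constants carries a negative representation), and likewise $\alpha_f$ is negative since $f$ is a non-constant monomial-generated invariant subspace. Therefore $-[\alpha_f]+\sum_{i=1}^n[\alpha_{x_i}]\in\widehat{I}$, and $P_X^{\overline{G}}\in 1+\widehat{I}$ because its constant term (the coefficient of the trivial representation) equals $\dim A_X^{\mathrm{triv}}=1$. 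So the manipulation is legitimate in $\widehat{R}_{-}(\overline{G})$.

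The one genuine point to be careful about is that $(1-[\alpha])^{-1}$ makes sense in the completed ring and that the product over $i$ can be combined with the single factor $(1-[\alpha_f])$ so as to match the shape $\prod_{\alpha<0}(1-[\alpha])^{-s_\alpha}$ appearing in the definition of $\mathrm{Exp}$; this is just collecting exponents, with the caveat that if some $\alpha_{x_i}$ coincide with each other or with $\alpha_f$ the coefficients $s_\alpha$ add up accordingly — but the final formula $\sum_i[\alpha_{x_i}]-[\alpha_f]$ is written additively and is insensitive to such coincidences, so no case distinction is needed. I do not anticipate a real obstacle here: the whole proof is a one-line application of the homomorphism property of $\mathrm{Log}$, and the only thing worth spelling out is the identification of $P_X^{\overline{G}}$ as $\mathrm{Exp}$ of the asserted element, which is immediate from the Proposition and the explicit formula for $\mathrm{Exp}$.
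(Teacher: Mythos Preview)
Your proposal is correct and is exactly the intended derivation: the paper states this as an immediate Corollary of the preceding Proposition without giving a separate proof, and what you have written is precisely the one-line application of $\mathrm{Log}$ (equivalently, recognition of the Proposition's formula as $\mathrm{Exp}$ of the asserted element) that the paper leaves implicit. Your extra care in checking that $\alpha_{x_i}$ and $\alpha_f$ are negative and that $P_X^{\overline{G}}\in 1+\widehat{I}$ is appropriate and not something the paper spells out.
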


Note that $\mbox{Log\,}P_X^{\overline{G}}$ is an element of the representation ring 
$R(\overline{G})$ of the group $\overline{G}$ (more precisely of the subring $R_{-}(\overline{G})\subset R(\overline{G})$), not only of the completion $\widehat{R}_{-}(\overline{G})$.

%%%%%%%%%%%%%
Now we recall the necessary definitions and facts about Burnside rings of finite groups
(for more details see, e.g., \cite{G-Saito}) and give an appropriate extension of this notion
to subgroups of $\overline{G}_f$ containing $\CC^*$.

Let $G$ be a finite group. A $G$-set is a set with an action of the group $G$. A $G$-set is {\em irreducible} if the action
of $G$ on it is transitive. Isomorphism classes of irreducible $G$-sets are in one-to-one correspondence with
conjugacy classes of subgroups of $G$: to the conjugacy class containing a subgroup $H\subset G$ one associates the isomorphism class $[G/H]$
of the $G$-set $G/H$. The {\em Grothendieck ring} $\Kring{G}$ {\em of finite $G$-sets}
(also called the {\em Burnside ring} of $G$: see, e.g.,
\cite{Knutson}) is the (abelian) group generated by the isomorphism classes of finite $G$-sets
modulo the relation $[A\amalg B]=[A]+[B]$ for finite $G$-sets $A$ and $B$. The multiplication
in $\Kring{G}$ is defined by the 
%% Cartesian
cartesian product. As an abelian group, $\Kring{G}$
is freely generated by the isomorphism classes of irreducible $G$-sets. The element $1$ in
the ring $\Kring{G}$ is represented by the $G$-set consisting of one point (with the trivial $G$-action).

There is a natural homomorphism from the Burnside ring $\Kring{G}$ to the ring $R(G)$ of representations of the group $G$ which sends a $G$-set $X$ to the (vector)
space of (complex valued) functions on $X$. 

For a subgroup $H\subset G$ there are natural maps
$\mbox{Res}_{H}^{G}: \Kring{G}\to \Kring{H}$ and $\mbox{Ind}_{H}^{G}: \Kring{H}\to \Kring{G}$.
The {\em restriction map} $\mbox{Res}_{H}^{G}$ sends a $G$-set X to the same set considered with the $H$-action.
The {\em induction map} $\mbox{Ind}_{H}^{G}$ sends an $H$-set $X$ to the product $G\times X$ factorized
by the natural equivalence: $(g_1, x_1)\sim (g_2, x_2)$ if there exists $g\in H$ such that
$g_2=g_1g$, $x_2=g^{-1}x_1$ with the natural (left) $G$-action. The induction map $\mbox{Ind}_{H}^{G}$ sends the class $[H/H']$ ($H'$ is a subgroup of $H$) to the class $[G/H']$. 
Both maps are group homomorphisms, however the induction map $\mbox{Ind}_{H}^{G}$ is not a ring homomorphism. 

For an action of a group $G$ on a set $X$ and for a point $x\in X$, let $G_x=\{g\in G: gx=x\}$
be the isotropy group of the point $x$. For a subgroup $H\subset G$ let $X^{(H)}=\{x\in X: G_x=H\}$ be the set of points with the isotropy group $H$.

We recall the definition of the $G$-equivariant zeta function of $f$ from \cite{G-Saito}. See an explanation of this notion therein.

The monodromy transformation of $f$ can be defined as the element $h=h_f\in G_f$ given by
$$
h=\left(\exp(2\pi i \,q_1/d), \ldots, \exp(2\pi i \,q_n/d)\right)\,.
$$
As a map from the Milnor fibre $V_f=f^{-1}(1)$ of $f$ to itself,
$h$ defines an action (a faithful one) of the cyclic group $\ZZ_{d}=\langle h\rangle$
of order $d$ on $V_f$. Let
\begin{equation} \label{Defzeta}
\zeta_f(t)=\prod\limits_{q\ge 0} \left(\det(\mbox{id}-t\cdot h_{*}\mbox{\raisebox{-0.5ex}{$\vert$}}{}_{H_q(V_f)})\right)^{(-1)^q}
\end{equation}
be the (classical) monodromy zeta function of $f$ (that is the zeta function of the transformation $h$). One can show that in the described situation one has
$$
\zeta_f(t)=\prod\limits_{m\vert d}(1-t^m)^{s_m},
$$
where $s_m=\chi(V_f^{(\ZZ_{d/m})})/m$ are integers. If in (\ref{Defzeta}) one considers the action of $h_{*}$ on the reduced homology groups of $V_f$, one obtains the {\em reduced monodromy zeta function} $\widetilde{\zeta}_f(t) = \zeta_f(t)/(1-t)$.

There is a natural one-to-one correspondence between functions of the form
\begin{equation}\label{phi}
\varphi(t)=\prod\limits_{m\vert d}(1-t^m)^{s_m}
\end{equation}
and elements of the Burnside ring
$\Kring{\ZZ_{d}}$ of the cyclic group $\ZZ_d$, see \cite{G-Saito}.
The function $\varphi(t)$ from (\ref{phi}) corresponds to the element 
$\sum\limits_{m\vert \overline{d}}s_m[\ZZ_{d}/\ZZ_{d/m}] \in \Kring{\ZZ_{d}}$.
For the monodromy transformation $h_f$ in these terms one has 
\begin{equation}\label{zeta_Z_d}
\zeta_f=\sum_{H\subset \ZZ_d} \chi(V_f^{(H)}/\ZZ_d)[\ZZ_d/H]\in \Kring{\ZZ_d}\,.
\end{equation}
The coefficient $\chi(V_f^{(H)}/\ZZ_d)$ is the Euler characteristic of the space (a manifold) of orbits of type $\ZZ_d/H$ in $V_f$.

Now let $G$ be a subgroup of the symmetry group $G_f$ of the quasihomogeneous polynomial $f$ containing the monodromy transformation $h$.
Equation~(\ref{zeta_Z_d}) inspires the following definition (see \cite{G-Saito}).

\begin{definition}
The {\em $G$-equivariant zeta function} of $f$ is the element
\begin{equation}\label{G-zeta}
\zeta_f^{G}=\sum_{H\subset G} \chi(V_f^{(H)}/G)[G/H]
\end{equation}
of the Burnside ring $\Kring{G}$. 
\end{definition}

The coefficient $\chi(V_f^{(H)}/G)$ is the Euler characteristic of the space (a manifold) of orbits of type $G/H$ in $V_f$.

\begin{definition}
The {\em reduced $G$-equivariant zeta function} of $f$ is $\widetilde{\zeta}_f^G = \zeta_f^G-1$.
\end{definition}

%%%%%%%%%%%%%%%%%%%%%
Let $\Kring{\overline{G}}$ be the Grothendieck group of $\overline{G}$-sets with finite numbers of orbits and finite isotropy groups of points.
This group is freely generated by the classes of the $\overline{G}$-sets $\overline{G}/H$ with finite subgroups $H$.

\begin{remark}
There is no natural ring structure on $\Kring{\overline{G}}$.
\end{remark}

As above (for finite groups), one also has the natural induction map $\mbox{Ind}_{G}^{\overline{G}}: \Kring{G}\to \Kring{\overline{G}}$ which sends the class $[G/H]$ to the class $[\overline{G}/H]$
for a subgroup $H\subset G$. A left inverse to this map is the {\em reduction map} $\mbox{Red}:\Kring{\overline{G}}\to \Kring{G}$ which sends the class $[\overline{G}/H]$ to the class $[G/H\cap G]$
($H\subset \overline{G}$, $|H|<\infty$).

The group $\overline{G}$ acts on the zero level set $X$ of the function $f$.  The $\overline{G}$-equivariant orbit invariant $\mbox{Or}_X^{\overline{G}}$
counts the orbits of the $\overline{G}$-action on $X$ of different types.

\begin{definition} (cf. the definition in \cite{MRL}, \cite{Trieste})
$$
\mbox{Or}_X^{\overline{G}}:=\sum_{H\subset\overline{G},\, |H|<\infty}\chi(X^{(H)}/\overline{G})[\overline{G}/H]\in \Kring{\overline{G}}\,.
$$
\end{definition}

Let the {\em tautological map} $\mbox{Tau}$ from the ideal $I\subset R_{-}(\overline{G})$ to the group $\Kring{\overline{G}}$ be the (additive) group homomorphism mapping the class $[\alpha]$ of a one dimensional representation $\alpha$ to the class of the punctured space $\CC^*$ of the space (line) $\CC^1$ of the representation with the action of the group $\overline{G}$ defined by the representation $\alpha$. Note that $\mbox{Tau\,}[\alpha]$ can be defined in the same way for a positive representation $\alpha$ and one has $\mbox{Tau\,}[\alpha]=\mbox{Tau\,}[\alpha^{-1}]$ 
(via the isomorphism of $\CC^*$ with itself which sends $z\in \CC^*$ to $z^{-1}$).

\begin{theorem} One has
\begin{equation}\label{main}
\mbox{\rm Tau}({\mbox{\rm Log\,}}P_X^{\overline{G}})- \mbox{\rm Or}_X^{\overline{G}}=\mbox{\rm Ind}_G^{\overline{G}}\,\widetilde{\zeta}_f^G
\end{equation}
in $\Kring{\overline{G}}$.
\end{theorem}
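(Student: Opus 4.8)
The plan is to compute both sides explicitly in $\Kring{\overline{G}}$ by expressing everything in terms of orbit data of the $\CC^*$-action extended by $\overline{G}$, and to reduce the equality to the non-equivariant identity (\ref{ini}) applied orbit-by-orbit. First I would use the Corollary, which gives $\mbox{\rm Log\,}P_X^{\overline{G}} = \sum_{i=1}^n [\alpha_{x_i}] - [\alpha_f]$, so that the left-hand side of (\ref{main}) becomes $\sum_{i=1}^n \mbox{\rm Tau}[\alpha_{x_i}] - \mbox{\rm Tau}[\alpha_f] - \mbox{\rm Or}_X^{\overline{G}}$. Each $\mbox{\rm Tau}[\alpha_{x_i}]$ is the class of $\CC^*$ with the $\overline{G}$-action through $\alpha_{x_i}$, i.e.\ the class of the $i$-th coordinate axis minus the origin in $\CC^n$ with its $\overline{G}$-action; likewise $\mbox{\rm Tau}[\alpha_f]$ is $\CC^*$ acted on via $\alpha_f$. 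The idea is that $\coprod_i \mbox{\rm Tau}[\alpha_{x_i}]$ together with a correction involving $f$ accounts, via inclusion–exclusion on the coordinate strata of $\CC^n$, for the difference between the orbit structure of $V_f = f^{-1}(1)$ and that of $X = f^{-1}(0)$.

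The key step is a stratification argument. Decompose $\CC^n$ (and $X$, and $V_f$) into the locally closed strata $\CC^n_I = \{x : x_i \neq 0 \Leftrightarrow i \in I\}$ for $I \subseteq \{1,\dots,n\}$. On each stratum the $\CC^*$-action is free modulo a finite group, and the $\overline{G}$-action has a well-defined isotropy type that is constant along the stratum up to conjugacy; the Euler characteristics $\chi(X^{(H)}/\overline{G})$ and $\chi(V_f^{(H)}/G)$ decompose as sums of the contributions of the strata. On a fixed stratum $\CC^n_I$, the function $f$ restricts either to a nowhere-zero function (in which case $X$ misses the stratum and $V_f \cap \CC^n_I$ is, up to the monodromy/$\CC^*$ factor, a product of a torus with a fixed fibre), or to a nontrivial quasihomogeneous function, and the local comparison of the orbit counts of $\{f=0\}$ and $\{f=1\}$ within $\CC^n_I$ is exactly the content of the classical relation (\ref{ini}): $P_X(t)\cdot \mbox{Or}_X(t) = \widetilde{\zeta}_f^*(t)$ becomes, after applying $\mbox{\rm Tau}$ and using that $\mbox{\rm Tau}$ intertwines Saito duality with orbit-type bookkeeping (this is the point of the construction of $\mbox{\rm Tau}$ and of the remark $\mbox{\rm Tau}[\alpha]=\mbox{\rm Tau}[\alpha^{-1}]$), the stratum-wise incarnation of (\ref{main}). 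Summing over all strata $I$ and using additivity of all the invariants in $\Kring{\overline{G}}$ gives the result. The role of $\mbox{\rm Ind}_G^{\overline{G}}$ is that $V_f$ carries only the finite group $G\supseteq\langle h\rangle$, not all of $\overline{G}$, so the zeta-function data must be induced up; the left-hand side, built from $X$ and from representations of $\overline{G}$, already lives in $\Kring{\overline{G}}$.

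Concretely, I would proceed: (i) apply the Corollary to rewrite the left side; (ii) write out $\mbox{\rm Tau}[\alpha_{x_i}]$, $\mbox{\rm Tau}[\alpha_f]$, $\mbox{\rm Or}_X^{\overline{G}}$, and $\mbox{\rm Ind}_G^{\overline{G}}\widetilde{\zeta}_f^G$ as explicit $\ZZ$-combinations of classes $[\overline{G}/H]$ with $H$ finite, indexed by the strata $\CC^n_I$ and the isotropy subgroups arising there; (iii) on each stratum, identify the relevant isotropy groups $H\cap G$ for $V_f$ versus $H$ for $X$, and match coefficients using the known non-equivariant computation of $P_X(t)$, $\mbox{Or}_X(t)$, $\widetilde\zeta_f(t)$ together with the Saito duality formula of \cite{G-Saito}; (iv) conclude by additivity. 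The main obstacle I expect is step (iii): making the bookkeeping of isotropy subgroups of $\overline{G}$ along each coordinate stratum precise, in particular checking that the punctured-line classes $\mbox{\rm Tau}[\alpha]$ assemble correctly with signs so that the inclusion–exclusion over the strata $\CC^n_I$ matches the alternating product defining $\zeta_f$, and verifying that the Saito duality with respect to the quasidegree $d$ is exactly what converts the representation $\alpha_f$ (whose $\CC^*$-weight is $-d$) into the induced zeta-function contribution. Everything else is formal manipulation in the free abelian groups $\Kring{\overline{G}}$ and $\widehat R_-(\overline{G})$.
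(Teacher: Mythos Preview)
Your opening moves are right: apply the Corollary, stratify $\CC^n$, $X$ and $V_f$ by the coordinate tori $(\CC^*)^I$, and observe that $\mbox{Tau}[\alpha_{x_i}]$ is literally the class $[\overline{G}/G^{\{i\}}]$ of the $i$-th punctured coordinate axis. But the heart of your plan, step (iii), goes in the wrong direction. You propose to match coefficients stratum by stratum by invoking the classical identity (\ref{ini}) and the Saito duality of \cite{G-Saito}. This is both circular and unnecessary: the paper's explicit goal is to \emph{explain} (\ref{ini}) via (\ref{main}), not to deduce (\ref{main}) from it, and as Remark~2 after the Theorem points out, Saito duality does not enter (\ref{main}) at all --- it only appeared in (\ref{ini}) as an artifact of how $\mbox{Or}_X(t)$ was encoded. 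So your anticipated ``main obstacle'' is not an obstacle to be overcome but a sign that the route is wrong.

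What is missing is the elementary observation that does all the work in the paper's proof. On each stratum $(\CC^*)^I$, the map $f$ (when not identically zero) gives a surjection onto $\CC$, and $\overline{G}/G\cong\CC^*$ acts transitively on the nonzero values of $f$; hence $\bigl((\CC^*)^I\setminus X^I\bigr)/\overline{G}\cong (V_f\cap(\CC^*)^I)/G$. Taking Euler characteristics yields $\chi((V_f\cap(\CC^*)^I)/G)=\chi((\CC^*)^I/\overline{G})-\chi(X^I/\overline{G})$. Now $\chi((\CC^*)^I/\overline{G})$ is $0$ for $|I|\ne 1$ and $1$ for $|I|=1$, so after inducing up and summing over $I$ the $X$-contributions cancel against $\mbox{Or}_X^{\overline{G}}$ and only the $n$ singleton terms survive, giving exactly $\sum_i[\overline{G}/G^{\{i\}}]-[\overline{G}/G]=\mbox{Tau}(\mbox{Log}\,P_X^{\overline{G}})$ (the last term because $\ker\alpha_f=G$, so $\mbox{Tau}[\alpha_f]=[\overline{G}/G]=\mbox{Ind}_G^{\overline{G}}1$). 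No inclusion--exclusion, no alternating signs, no Saito duality --- just additivity of $\chi$ over the stratification and the vanishing of $\chi$ for a torus of dimension $\ne 0$.
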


\begin{proof}
For $I\subset I_0=\{1, \ldots, n\}$, let $|I|$ be the number of elements of $I$, let
$(\CC^*)^I:= \{(x_1, \ldots, x_n)\in \CC^n: x_i\ne 0 \mbox{ for }i\in I, x_i=0 \mbox{ for }i\notin I\}$
be the corresponding coordinate torus of dimension $I$ ($(\CC^*)^{\emptyset}=\emptyset$),
and let $G^I \subset \overline{G}$ be the isotropy subgroup $\{a\in \overline{G}: ax=x \mbox{ for } x\in (\CC^*)^I\}$ of points of $(\CC^*)^I$
(this isotropy subgroup is one and the same for all points $x\in (\CC^*)^I$).
Let $X^I=X\cap (\CC^*)^I$, $Y^I= X^I/\overline{G}$. One has
$$
\mbox{Or}_X^{\overline{G}}=\sum_I \chi(Y^I)\cdot [\overline{G}/G^I].
$$

The Milnor fibre $V_f=f^{-1}(1)$ is the union $\bigcup\limits_{I}(V_f \cap (\CC^*)^I)$ of $G$-invariant varieties. Therefore
$$
\widetilde{\zeta}_f^G=\sum\limits_{I}\chi((V_f \cap (\CC^*)^I)/G)[G/G^I]-1=
\sum\limits_{I}\left[\chi((\CC^*)^I/\overline{G})-\chi(Y^I)\right][G/G^I]-1\,.
$$
Note that, if $G^I\not\subset G$, then $V_f\cap(\CC^*)^I=\emptyset$.
For $|I|\ne 1$, one has $\chi((\CC^*)^I/\overline{G})=0$, for $|I|= 1$, $\chi((\CC^*)^I/\overline{G})=1$.
Therefore
$$
\mbox{Or}_X^{\overline{G}}+\mbox{Ind}_G^{\overline{G}}\,\widetilde{\zeta}_f^G=
\sum_{i=1}^n [\overline{G}/G^{\{i\}}]-\mbox{Ind}_G^{\overline{G}}\,1=
\mbox{Tau}(\mbox{Log\,}P_X^{\overline{G}}).
$$
\end{proof}

\begin{remarks}
{\bf 1.} One could prefer to have an equation like (\ref{main}) inside
%% a more usual group or ring as, say, 
the Burnside ring $\Kring{G}$. One can see that (\ref{main}) implies the equation
\begin{equation}\label{secondary}
%% \mbox{Red}(\mbox{Tau}(\mbox{Log\,}P_X^{\overline{G}})- \mbox{Or}_X^{\overline{G}})=\widetilde{\zeta}_f^G
\widetilde{\zeta}_f^G=\mbox{Red}(\mbox{Tau}(\mbox{Log\,}P_X^{\overline{G}})- \mbox{Or}_X^{\overline{G}})
\end{equation}
in $\Kring{G}$ with the reduction map $\mbox{Red}:\Kring{\overline{G}}\to\Kring{G}$.
However from a formal point of view (\ref{secondary}) is weaker than (\ref{main}).

{\bf 2.} Looking at the relation (\ref{main}), one observes that there is no Saito duality in the sense of \cite{G-Saito} involved in it. It appears in  (\ref{ini}) because of the method used to
encode the $\CC^*$-action on $X$ in $\mbox{Or}_X(t)$.

{\bf 3.} One can see that, generally speaking, both $\mbox{Or}_X^{\overline{G}}$ and
$\widetilde{\zeta}_f^G$ contain much more summands than $\mbox{Tau\,}(\mbox{Log\,}P_X^{\overline{G}})$.
In particular, $\mbox{Tau\,}(\mbox{Log\,}P_X^{\overline{G}})$ contains only summands represented by irreducible $\overline{G}$-sets isomorphic (as varieties) to $\CC^*$. This gives the hint that 
(\ref{main}) (and therefore also (\ref{ini})) is essentially a relation between $\mbox{Or}_X^{\overline{G}}$ and $\widetilde{\zeta}_f^G$, where the Poincar\'e series 
$P_X^{\overline{G}}$ plays rather the role of a correction term.
\end{remarks}

%%%%%%%%%%%%%%%%%%%%%%%%%%%%%%%%%%%%%%%

\bigskip
\noindent Leibniz Universit\"{a}t Hannover, Institut f\"{u}r Algebraische Geometrie,\\
Postfach 6009, D-30060 Hannover, Germany \\
E-mail: ebeling@math.uni-hannover.de\\

\medskip
\noindent Moscow State University, Faculty of Mechanics and Mathematics,\\
Moscow, GSP-1, 119991, Russia\\
E-mail: sabir@mccme.ru

\end{document}